\newtheorem{theorem}{Theorem}[section]
\newtheorem{corollary}[theorem]{Corollary}
\newtheorem{proposition}[theorem]{Proposition}
\newtheorem{question}[theorem]{Question}
\newtheorem{remark}[theorem]{Remark}
\begin{document}

\title{Diagonals of separately continuous multi-valued mappings}

\author{O.G.Fotiy, V.V.Mykhaylyuk, O.V.Sobchuk}
\address{Department of Mathematics\\
Chernivtsi National University\\ str. Kotsjubyn'skogo 2,
Chernivtsi, 58012 Ukraine}
\email{ofotiy@ukr.net, vmykhaylyuk@ukr.net, ss220367@ukr.net}

\subjclass[2000]{Primary 54C60; Secondary 54C08, 54C05, 54D05}


\commby{Ronald A. Fintushel}


\keywords{separately continuous mapping, multi-valued mapping, diagonal of mapping}

\begin{abstract}
We solve a problem on a construction of a separately continuous mapping with the given diagonal, which is the pointwise limit of a
sequence of continuous mappings valued in an equiconnected space. We construct an example of a closed-valued separately continuous
mapping $f:[0,1]^2\to \mathbb R$ with an everywhere discontinuous diagonal. The example shows that the results on the joint continuity point set
of compact-valued separately continuous mappings can not be generalized to the case of closed-valued mappings.
\end{abstract}

\maketitle
\section{Introduction}

For a mapping $f:X^2\to Y$ the function $g:X\to Y$, $g(x)=f(x,x)$, is called {\it the diagonal of the mapping $f$.}

The problem on the construction a separately continuous function (i.e. functions which are continuous with respect each variable) $f:X^2\to\mathbb R$ with the given diagonal was solved by R.~Baire \cite{Baire} in the case of  $X=\mathbb R$. R.~Baire shows that the diagonals of separately continuous functions of two real variables are exactly the first Baire functions, i.e. functions which are pointwise limits of sequences of continuous functions. This result was generalized by many mathematicians as well in the direction of Baire classification of separately continuous mappings $f:X\times Y\to Z$ as in the direction of the construction of a separately continuous mappings $f:X^2\to Z$ with the given diagonal of the first Baire class (see \cite{KMS} and  literature given there). Most general result on the construction of a separately continuous function with the given diagonal was obtained in \cite[Corollary 3.2]{KMS}. It was shown in \cite{KMS} that for every topological space $X$, metrizable equiconnected space $Z$ and function $g:X\to Z$ of the first Baire class there exists a separately continuous function $f:X^2\to Z$ with the diagonal $g$. In this connection the following question naturally arises: can we weaken the equiconnectivity of the space $Z$, in particular, can we generalize this result to the case when the diagonal $g$  is the pointwise limit of a sequence of continuous functions which valued in a equiconnected subspace $Z_1$ of the space $Z$?

On other hand, in the separately continuous mappings theory the following question naturally arises: it is possible to transfer results on separately continuous mappings on the case of multivalued mappings. For compact-valued mappings with values in a metrizable space such transfer is quite simple, because Vietoris topology on a space of nonempty compact subsets of a metrizable space is generated by Hausdorff metric \cite[c.62]{S}. The joint continuity of separately continuous closed-valued mappings $f:X\times Y\to\mathbb R$ was study in \cite{MMF}. An analog of Calbrix-Troallic theorem \cite{CT} on the set of the continuity points on horizontales was proved in \cite{MMF}. Namely, it was shown that for a topological space $X$, a $T_1$-space $Y$, a point $y_0\in Y$, which has a countable base of neighborhoods,  a metrizable locally compact $\sigma$-compact space $Z$ and a separately continuous closed-valued mapping $f:X\times Y\to Z$ there exists a residual set $A\subseteq X$  such that $f$ is jointly continuous at every point of the set $A\times\{y_0\}$. It remains non-established the following question: is it possible to transfer on the case of closed-valued mappings the Calbrix-Troallic result on the set of vertical lines which contained in the jointly continuity points set. In particular, the following question naturally arise.

\begin{question}\label{q:1} Let $f:[0,1]^2\to \mathbb R$ be a closed-valued separately continuous mapping.

$a)$\,\,Exist there a residual set $A\subseteq [0,1]$ such that $f$ is jointly continuous at every point of the set $A\times [0,1]$?

$b)$\,\,Exist there for every continuous function $\varphi:[0,1]\to[0,1]$ a residual set $A\subseteq [0,1]$ such that $f$ is jointly continuous at every point of the set $\{(x,\varphi(x)):x\in A \}$?
\end{question}

In this paper we firstly generalize a result from \cite{KMS} on the construction of separately continuous function with the given diagonal. Further, using this fact we give a negative answer on the both parts of Question \ref{q:1}. Moreover, we construct an example of a closed-valued separately continuous mapping $f:[0,1]^2\to \mathbb R$ with an everywhere discontinuous diagonal, and construct examples which point to the essentiality of some condions on obtained results.

\section{Diagonals of separately continuous mappings with values in equiconnected spaces}

Let $X$ be a topological space and $\Delta=\{(x,x)\in X^2:x\in X\}$. A set  $A\subseteq X$ is called {\it equiconnected in $X$} if there exists a continuous mapping  $\lambda:((X\times X)\cup \Delta)\times [0,1]\to X$ such that

$(i)$\,\,\, $\lambda(A\times A\times [0,1])\subseteq A)$;

$(ii)$\,\,\, $\lambda(x,y,0)=\lambda(y,x,1)=x$ for every $x,y\in A$;

$(iii)$\,\,\,$\lambda(x,x,t)=x$ for every $x\in X$ and $t\in [0,1]$.

\noindent A space is {\it equiconnected} if it is equiconnected in itself.

The following result generalize Theorem 3.1 from \cite{KMS}.

\begin{theorem}\label{th:3.1}
Let $X$ be a topological space, $Z$ be a Hausdorff space, $(Z_1,\lambda)$ be a equiconnected subspace of $Z$, $g:X\to Z$, $(G_n)_{n=0}^{\infty}$ and  $(F_n)_{n=0}^{\infty}$ be sequences of functionally open in $X^2$ sets $G_n$ and functionally closed in $X^2$ sets $F_n$ respectively, $(\varphi_n)_{n=1}^{\infty}$ be a sequence of separately continuous functions $\varphi_n:X^2\to [0,1]$, $(g_n)^{\infty}_{n=1}$ be a sequence of continuous mappings $g_n:X\to Z_1$,  which satisfy the following conditions:

$(1)$\,\,\, $G_0=F_0=X^2$ and $\Delta=\{(x,x):x\in X\}\subseteq G_{n+1}\subseteq F_n\subseteq G_n$ for every $n\in\mathbb N$;

$(2)$\,\,\, $X^2\setminus G_n\subseteq\varphi_n^{-1}(0)$ ³ $F_n\subseteq \varphi_n^{-1}(1)$ for every $n\in\mathbb N$;

$(3)$\,\,\,$\lim\limits_{n\to\infty}\lambda(g_n(x_n),g_{n+1}(x_n),t_n)=g(x)$ for every $x\in X$ and every sequence $(t_n)_{n=1}^{\infty}$ of points $t_n\in [0,1]$ and sequence $(x_n)_{n=1}^{\infty}$ of points $x_n\in X$ such that $(x_n,x)\in F_{n-1}$ for all $n\in\mathbb N$.

Then the mapping $f:X^2\to Y$,
 \begin{equation}\label{eq:3.0}
 f(x,y)=\left\{\begin{array}{ll}
                         \lambda(g_n(x),g_{n+1}(x),\varphi_n(x,y)), & (x,y)\in F_{n-1}\setminus F_n\\
                         g(x), & (x,y)\in E=\bigcap\limits_{n=1}^{\infty} G_n.
                       \end{array}
 \right.
 \end{equation}
is separately continuous.
\end{theorem}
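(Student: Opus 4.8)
The plan is to reduce the statement to continuity in each variable separately and to organise everything around the auxiliary maps $h_n(x,y)=\lambda(g_n(x),g_{n+1}(x),\varphi_n(x,y))$, since $f$ coincides with $h_n$ on $F_{n-1}\setminus F_n$. First I would record the bookkeeping coming from (1): one has $G_{n+1}\subseteq F_n\subseteq G_n$, hence $E=\bigcap_n G_n=\bigcap_n F_n$, and the sets $F_{n-1}\setminus F_n$ ($n\ge 1$) partition $X^2\setminus E$, so \eqref{eq:3.0} does define $f$ everywhere and unambiguously. Each $h_n$ is continuous in $x$ and in $y$ separately, because $g_n,g_{n+1},\lambda$ are continuous and $\varphi_n$ is separately continuous. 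From (2) and (ii) I would extract the two identities that drive all the gluing: on $X^2\setminus G_n$ we have $\varphi_n=0$, so $h_n(x,y)=g_n(x)$, and on $F_n$ we have $\varphi_n=1$, so $h_n(x,y)=g_{n+1}(x)$.

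For a point $p_0\notin E$, say $p_0\in F_{n-1}\setminus F_n$, I would fix one coordinate and work on the open neighbourhood of $p_0$ (in the relevant slice) obtained by intersecting with $G_{n-1}$ and deleting $F_n$. A short computation with the nesting shows that on this neighbourhood only the two formulas $h_{n-1}$ and $h_n$ occur (the level $m$ of any nearby point satisfies $n-1\le m\le n$). If $p_0\in G_n$, then on the smaller neighbourhood lying inside $G_n$ only $h_n$ occurs, and continuity is immediate. If $p_0\in F_{n-1}\setminus G_n$, then $f(p_0)=g_n(x_0)$, and near $p_0$ the value of $f$ is always $h_{n-1}$ or $h_n$; the identities above give $h_{n-1}(p_0)=g_n(x_0)=h_n(p_0)$, so choosing a neighbourhood on which both continuous maps land in a prescribed neighbourhood of $g_n(x_0)$ yields continuity at $p_0$. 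This runs identically in the $x$- and the $y$-direction.

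The essential case is $p_0=(x_0,y_0)\in E$, where $f(p_0)=g(x_0)$. The key preliminary fact, which I expect to be the heart of the argument, is that (3) forces $g$ to be constant along $E$: if $(p,q)\in E$ then $(p,q)\in F_{n-1}$ for all $n$, so applying (3) to the constant sequence $x_n\equiv p$, once with target $q$ and once with target $p$, identifies $\lim_n\lambda(g_n(p),g_{n+1}(p),t_n)$ with both $g(q)$ and $g(p)$; since $Z$ is Hausdorff, $g(p)=g(q)$, and in particular $g(x_0)=g(y_0)$. I would also recast (3) in the equivalent uniform form: for each neighbourhood $O$ of $g(x)$ there is $N$ with $\lambda(g_n(\xi),g_{n+1}(\xi),t)\in O$ whenever $n\ge N$, $(\xi,x)\in F_{n-1}$ and $t\in[0,1]$ (otherwise one builds a sequence violating (3)). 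Now fix a net converging to $p_0$. Since $E\subseteq G_m$ with $G_m$ open, the net eventually enters $G_m$ for every $m$, and the nesting forces the level of any non-$E$ member to be at least $m$; hence these levels tend to infinity. For $y$-continuity (fix $x_0$) the formulas are $\lambda(g_n(x_0),g_{n+1}(x_0),\cdot)$, so the uniform form with target $x_0$ gives $f(x_0,\cdot)\to g(x_0)$ on the non-$E$ part, while $f(x_0,\cdot)=g(x_0)$ on the $E$ part. For $x$-continuity (fix $y_0$) the point inside $g_n$ now moves with the net, but every non-$E$ member satisfies $(x,y_0)\in F_{m-1}$, so the uniform form with target $y_0$ applies and yields values close to $g(y_0)=g(x_0)$; on the $E$ part $f(x,y_0)=g(x)=g(y_0)=g(x_0)$ by the constancy lemma.

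I expect the main obstacle to be precisely the $x$-direction at points of $E$: there the first argument of $\lambda$ ranges over the moving members of the net rather than over the fixed $x_0$, so (3) cannot be used in its naive diagonal form. The two devices that overcome this—applying (3) with the target taken to be the \emph{fixed second coordinate} $y_0$, and deducing the constancy $g(x_0)=g(y_0)$ on $E$ from the Hausdorff property—are the crux; everything else is topological bookkeeping with the nested families $(G_n)$ and $(F_n)$.
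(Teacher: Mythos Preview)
Your argument is correct and follows the same overall architecture as the paper: show separate continuity off $E$ by local formulas, prove the constancy $g(x)=g(y)$ on $E$ via condition (3) and Hausdorffness, and then deduce separate continuity at points of $E$ from (3). Your treatment of the $E$-case is in fact more explicit than the paper's, which simply says ``follows from (3)''; your uniform reformulation of (3) and the observation that the level of nearby non-$E$ points tends to infinity make the argument transparent.

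The one place where your tactics differ is the off-$E$ part. You localise to $G_{n-1}\setminus F_n$, observe that only the two formulas $h_{n-1}$ and $h_n$ can occur there, and glue them by checking they agree at the boundary point (using $\varphi_{n-1}=1$ on $F_{n-1}$ and $\varphi_n=0$ off $G_n$). The paper instead produces a \emph{single} separately continuous expression
\[
\lambda\bigl(\lambda(g_n(x),g_{n+1}(x),\varphi_n(x,y)),\,g_{n+2}(x),\,\varphi_{n+1}(x,y)\bigr)
\]
valid on all of $F_{n-1}\setminus F_{n+1}$, and hence on the open set $G_n\setminus F_{n+1}$; this composite-$\lambda$ trick absorbs your two cases into one formula and avoids the gluing step. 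Both arguments are equivalent in content, but the paper's device is slightly slicker, while your version makes the transition mechanism between adjacent levels more visible.
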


\begin{proof} We fix $n\in\mathbb N$ and show that
\begin{equation}\label{eq:3.1}
f(x,y)=\lambda(\lambda(g_n(x),g_{n+1}(x),\varphi_n(x,y)), g_{n+2}(x),\varphi_{n+1}(x,y))
\end{equation}
for all $(x,y)\in F_{n-1}\setminus F_{n+1}$.

Let $(x,y)\in F_{n-1}\setminus F_{n}$. Since $G_{n+1}\subseteq F_n$, $(x,y)\not\in G_{n+1}$ and according to $(2)$, $\varphi_{n+1}(x,y)=0$. Therefore, $$\lambda(\lambda(g_n(x),g_{n+1}(x),\varphi_n(x,y)), g_{n+2}(x),\varphi_{n+1}(x,y))=\lambda(g_n(x),g_{n+1}(x),\varphi_n(x,y))=f(x,y).$$

Now let $(x,y)\in F_{n}\setminus F_{n+1}$. Then according to $(2)$, $\varphi_{n}(x,y)=1$ and $$\lambda(\lambda(g_n(x),g_{n+1}(x),\varphi_n(x,y)), g_{n+2}(x),\varphi_{n+1}(x,y))=$$ $$=\lambda(g_{n+1}(x),g_{n+2}(x),\varphi_{n+1}(x,y))=f(x,y).$$

Using the continuity of mappings $\lambda$, $g_n$, $g_{n+1}$, $g_{n+2}$, $\varphi_n$ and $\varphi_{n+1}$ we obtain that $f$ is continuous on the open set $G_n\setminus F_{n+1}$ for every $n\in\mathbb N$, as the composition of continuous mappings. Moreover, $f$ is continuous on the open set $G_0\setminus F_1=F_0\setminus F_1$. Therefore, $f$ is continuous on the open set $X^2\setminus E=\bigcup\limits_{n=1}^{\infty}(G_{n-1}\setminus F_n)$.

It remains to verify the continuity of the mappings $f$ with respect to variables $x$ and $y$ at all points of the set $E$. Note that $g(x)=g(y)$ for every  $(x,y)\in E$. Really, it follows from the condition $(3)$ that $\lim\limits_{n\to\infty}g_n(x)=g(x)$. Moreover, since $(x,y)\in F_{n-1}$ for every $n\in\mathbb N$, it follows from $(4)$ that $\lim\limits_{n\to\infty}g_n(x)=g(y)$. Equality $g(x)=g(y)$ follows from the fact that $Z$ is a Hausdorff space.

Now the continuity of $f$ with respect every variable at all points of the set $E$ follows from $(3)$.
\end{proof}

\begin{corollary}\label{cor:3.2}
Let $X$ be a topological space, $Z$ be a metrizable space, $(Z_1,\lambda)$ be an equiconnected in $Z$ subset of the space $Z$, and  $g:X\to Z$, besides there exists a sequence of continuous functions $g_n:X\to Z_1$ which converges to $g$ pointwise on $X$. Then there exists a separately continuous mapping $f:X^2\to Z$ with the diagonal $g$.
\end{corollary}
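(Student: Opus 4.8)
The plan is to deduce the corollary from Theorem \ref{th:3.1} by manufacturing, out of the metric on $Z$ and the pointwise-convergent sequence $(g_n)$, the data $(G_n)$, $(F_n)$, $(\varphi_n)$ required there. Fix a metric $\rho$ on $Z$ and set $\bar d_k(x,y)=\min\{\rho(g_k(x),g_k(y)),1\}$; each $\bar d_k$ is continuous on $X^2$ and vanishes on $\Delta$. I would then compress the whole sequence into a single continuous function
\[
M(x,y)=\sum_{k=1}^{\infty}2^{-k}\,\bar d_k(x,y),
\]
which is well defined and continuous on $X^2$ because the series converges uniformly, takes values in $[0,1]$, and satisfies $M(x,x)=0$.

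Next I would use sub-level sets of $M$ with geometrically shrinking thresholds to produce the nested chain. With $c_n=4^{-n}$ and $c_n'=2\cdot 4^{-n}$ (so that $c_{n+1}'\le c_n<c_n'$ and all are positive), put $F_n=\{M\le c_n\}$ and $G_n=\{M<c_n'\}$ for $n\ge 1$, together with $F_0=G_0=X^2$. Since $M$ is continuous these are functionally closed and functionally open respectively, and the inequalities among the $c_n,c_n'$ give exactly the interleaving $\Delta\subseteq G_{n+1}\subseteq F_n\subseteq G_n$ of condition $(1)$. For condition $(2)$ I would take $\varphi_n=\gamma_n\circ M$, where $\gamma_n:[0,1]\to[0,1]$ is the piecewise-linear function equal to $1$ on $[0,c_n]$ and to $0$ on $[c_n',1]$; each $\varphi_n$ is continuous, hence separately continuous, equals $1$ on $F_n$, and equals $0$ off $G_n$.

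The crux is condition $(3)$, and this is where the metric control and the pointwise convergence must be combined. If $(x_n,x)\in F_{n-1}$, then $M(x_n,x)\le 4^{-(n-1)}$, and since $M\ge 2^{-k}\bar d_k$ termwise this forces $\bar d_n(x_n,x)$ and $\bar d_{n+1}(x_n,x)$ to tend to $0$; together with the pointwise convergence $g_n(x)\to g(x)$ and $g_{n+1}(x)\to g(x)$ at the fixed point, the triangle inequality yields $g_n(x_n)\to g(x)$ and $g_{n+1}(x_n)\to g(x)$. Thus the pairs $(g_n(x_n),g_{n+1}(x_n))$ approach the diagonal point $(g(x),g(x))$. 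To finish I would invoke compactness of $[0,1]$ and metrizability of $Z$: every subsequence of $(t_n)$ has a convergent sub-subsequence $t_{n_j}\to t^{*}$, and continuity of $\lambda$ at $(g(x),g(x),t^{*})$ together with $\lambda(g(x),g(x),t^{*})=g(x)$ forces $\lambda(g_n(x_n),g_{n+1}(x_n),t_n)\to g(x)$ irrespective of the chosen $t_n$. This establishes $(3)$, so Theorem \ref{th:3.1} produces a separately continuous $f:X^2\to Z$; and since $\Delta\subseteq E=\bigcap_n G_n$, its value on the diagonal is $f(x,x)=g(x)$, as required. I expect the main obstacle to be precisely the design of $M$ and the thresholds so that the single inclusion $(x_n,x)\in F_{n-1}$ simultaneously controls the two relevant quantities $\bar d_n$ and $\bar d_{n+1}$ while keeping the functionally-open/closed interleaving intact.
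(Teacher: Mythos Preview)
Your argument is correct and follows the same strategy as the paper: manufacture the data $(G_n),(F_n),(\varphi_n)$ of Theorem~\ref{th:3.1} from the metric and the approximating sequence, then verify condition~$(3)$ via the triangle inequality and continuity of $\lambda$ on the diagonal. The only difference is cosmetic: the paper avoids the weighted series $M$ altogether and instead takes the finite conjunctions
\[
G_n=\{(x,y):d(g_k(x),g_k(y))<\tfrac1n,\ k\le n+1\},\qquad
F_n=\{(x,y):d(g_k(x),g_k(y))\le\tfrac1{n+1},\ k\le n+2\},
\]
which makes the bounds $d(g_n(x_n),g_n(x))\le\tfrac1n$ and $d(g_{n+1}(x_n),g_{n+1}(x))\le\tfrac1n$ immediate when $(x_n,x)\in F_{n-1}$, so no threshold calibration is needed; your geometric choice $c_n=4^{-n}$ achieves the same control through the termwise estimate $2^{-k}\bar d_k\le M$.
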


\begin{proof}
Fix a metric $d$ on the space $Z$ which generates its topology. Put $G_0=F_0=X^2$, $$G_n=\{(x,y)\in X^2: d(g_{k}(x),g_k(y))<\frac1n,\,\,k=1,\dots , n+1\}$$ and  $$F_n=\{(x,y)\in X^2: d(g_{k}(x),g_k(y))\leq\frac{1}{n+1},\,\,k=1,\dots ,n+2\}.$$ All sets $G_n$ are functionally open, and all sets $F_n$ are functionally closed. Therefore for every $n\in\mathbb N$ there exists a continuous function $\varphi_n:X^2\to[0,1]$ with $\varphi_n^{-1}(0)=X^2\setminus G_n$ and $\varphi_n^{-1}(1)=F_n$. It follows from the continuity of the function $\lambda:((Z_1\times Z_1)\cup \Delta)\times [0,1]\to Z$ at all points of the diagonal $\Delta=\{(z,z):z\in Z\}$ that $\lim\limits_{n\to\infty}\lambda(z_n,z_{n+1},t_n)=z$ for every sequence $(z_n)_{n=1}^{\infty}$ of points $z_n\in Z_1$ which converges to $z\in Z$ and every sequence $(t_n)_{n=1}^{\infty}$ of points $t_n\in [0,1]$.

Now using Theorem \ref{th:3.1} we obtain a separately continuous mappings $f$ with the diagonal $g$.
\end{proof}

\section{Construction of separately continuous multi-valued mappings with everywhere discontinuous diagonal}

For a topological space $Y$ by ${\mathcal F}(Y)$ we denote the space of all nonempty closed subsets of the space $Y$ with Vietoris topology. Multi-valued mapping \mbox{$f:X\to {\mathcal F}(Y)$} we denote by $f:X \to Y$.

Let $X$, $Y$ be a topological space. A multi-valued mappings $f:X\to Y$ is called {\it upper (lower) continuous at $x_0\in X$} if for every open in $Y$ set $V$ such that $f(x_0)\subseteq V$ ($f(x_0)\cap V\ne\emptyset$) there exists a neighborhood $U$ of $x_0$ in $X$ such that $f(x)\subseteq V$ ($f(x)\cap V\ne\emptyset$) for every $x\in U$. A multi-valued mapping $f$ which upper and lower continuous at $x_0$ is called {\it continuous at $x_0$}.

A family $(A_s)_{s\in S}$ of subsets $A_s$ of a topological space $X$ is called {\it discrete} if for every $x\in X$ there exists a neighborhood $U$ of $x$
such that the set $\{s\in S:A_s\cap U\ne\emptyset\}$ contains at most an one element.

\begin{theorem}\label{th:4.1}
Let $Y$ be a topological space in which there exists a discrete sequence $(Y_n)_{n=1}^{\infty}$ of subspace $Y_n$ which are homeomorphic to $[0,1]$. Then there exists a separately continuous closed-valued mapping $f:\mathbb R^2\to Y$ with everywhere discontinuous diagonal.
\end{theorem}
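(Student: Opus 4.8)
The plan is to reduce to the worst case $Y^{\ast}:=\bigcup_{n}Y_{n}$ and exploit the non-metrizability of the hyperspace. Since the family $(Y_{n})$ is discrete, each $Y_{n}$ is relatively clopen in $Y^{\ast}$, so $Y^{\ast}$ is homeomorphic to the topological sum $\coprod_{n}[0,1]$; being a discrete union of compacta, $Y^{\ast}$ is closed in $Y$, hence $\mathcal{F}(Y^{\ast})$ embeds as a subspace of $\mathcal{F}(Y)$ and it suffices to build a separately continuous closed-valued $f:\mathbb{R}^{2}\to\mathcal{F}(Y^{\ast})$ with everywhere discontinuous diagonal. Fix homeomorphisms $\gamma_{k}:[0,1]\to Y_{k}$, put $0_{k}=\gamma_{k}(0)$ and let $O=\{0_{k}:k\in\mathbb{N}\}$, a closed subset of $Y^{\ast}$. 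I would take $Z=\mathcal{F}(Y^{\ast})$, which is Hausdorff (as $Y^{\ast}$ is metrizable) but, crucially, \emph{not} metrizable (as $Y^{\ast}$ is non-compact); this is exactly what permits a Baire-one map with no points of continuity, which in a metrizable target is impossible by the Baire theorem.

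Next I would fix an enumeration $\{c_{k}\}=\mathbb{Q}$ and continuous bumps $\psi_{k}:\mathbb{R}\to[0,1]$ with $\psi_{k}(c_{k})=1$, and set the intended diagonal $g(x)=O\cup\{\gamma_{k}(\psi_{k}(x)):k\in\mathbb{N}\}$ together with the continuous approximants $g_{n}(x)=O\cup\{\gamma_{k}(\psi_{k}(x)):k\le n\}$. Each $g_{n}$ is continuous into $\mathcal{F}(Y^{\ast})$ (finitely many points moving continuously over the fixed closed set $O$), and since $g_{n}(x)\subseteq g(x)$ the upper Vietoris condition is automatic while the lower one follows by filling in coordinates, so $g_{n}\to g$ pointwise. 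As equiconnected subspace I would take $Z_{1}=\{O\cup S: S\subseteq Y^{\ast}\text{ finite}\}$ (the base together with finitely many ``lifted'' coordinates), which contains every $g_{n}(x)$, and equiconnect it by moving the lifted coordinates linearly inside their copies while fixing $O$, extended by $\lambda(F,F,t)=F$ on the diagonal of $Z$; because any two members of $Z_{1}$ jointly involve only finitely many nonzero coordinates, this interpolation moves only finitely many points and is Vietoris-continuous, the shared base $O$ preventing any escape of mass ``at infinity''.

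The everywhere discontinuity of $g$ is the easy part. At an arbitrary $x_{0}$ I would take the open set $V$ equal to $\gamma_{k}\big([0,1)\big)$ in each $Y_{k}$ (and all of $Y_{k}$ in the at most one copy where $\psi_{k}(x_{0})=1$); then $g(x_{0})\subseteq V$, yet density of $\{c_{k}\}$ produces $c_{k}\to x_{0}$ with $\gamma_{k}(1)\in g(c_{k})\setminus V$, so $g$ fails upper semicontinuity at $x_{0}$. Finally I would define, in the spirit of Corollary~\ref{cor:3.2}, functionally open and closed sets $G_{n},F_{n}\subseteq\mathbb{R}^{2}$ through the coordinate functions $\psi_{k}$, choose separately continuous $\varphi_{n}$ with $\varphi_{n}^{-1}(0)=\mathbb{R}^{2}\setminus G_{n}$ and $\varphi_{n}^{-1}(1)=F_{n}$, verify $(1)$ and $(2)$ routinely, and invoke Theorem~\ref{th:3.1}; composing the resulting $f$ with the inclusion $\mathcal{F}(Y^{\ast})\hookrightarrow\mathcal{F}(Y)$ gives the required mapping.

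The hard part will be condition $(3)$: along every sequence with $(x_{n},x)\in F_{n-1}$ and every $(t_{n})$ one must have $\lambda(g_{n}(x_{n}),g_{n+1}(x_{n}),t_{n})\to g(x)$ in the Vietoris topology. Here the non-metrizability fights back: neighbourhoods of $g(x)$ may ``thin out'' across the infinitely many copies, and the freedom in $t_{n}$ means the newly activated coordinate $n+1$ contributes a point at an arbitrary height $t_{n}\psi_{n+1}(x_{n})$, which a sufficiently thin $V$ can exclude. The whole delicacy is therefore to choose the bump system and the sets $F_{n}$ so that, simultaneously, (a) along any $F_{n-1}$-approaching sequence the freshly activated coordinate is forced to land near $O$ (its height pinned small, ideally zero), guaranteeing upper Vietoris convergence, while (b) densely many copies are still activated near every point, which is what keeps the diagonal discontinuous. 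Reconciling controlled convergence with dense activation in a non-metrizable hyperspace is the crux, and I expect the bulk of the work to go into the combinatorial placement of the supports of the $\psi_{k}$ relative to the $F_{n}$; once $(3)$ is secured, Theorem~\ref{th:3.1} closes the argument.
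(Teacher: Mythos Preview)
Your proposal leaves the decisive step open, and in fact the construction you set up makes that step fail. With the point-valued diagonal $g(x)=O\cup\{\gamma_k(\psi_k(x)):k\in\mathbb N\}$ and your linear interpolation, the set $\lambda(g_n(x_n),g_{n+1}(x_n),t_n)$ contains $\gamma_{n+1}\bigl(t_n\psi_{n+1}(x_n)\bigr)$. A basic Vietoris neighbourhood of $g(x)$ may restrict the $(n+1)$-th copy to, say, $\gamma_{n+1}\bigl([0,\varepsilon)\cup(\psi_{n+1}(x)-\varepsilon,\psi_{n+1}(x)+\varepsilon)\bigr)$, and for $t_n$ near $1/2$ the point $\gamma_{n+1}\bigl(t_n\psi_{n+1}(x_n)\bigr)$ misses it whenever $\psi_{n+1}(x)$ is bounded away from $0$. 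Your plan to fix this by ``combinatorial placement of the supports'' runs into a contradiction: to make $g$ everywhere upper-discontinuous you need the supports $\psi_k^{-1}((0,1])$ to accumulate at every $x$, which forces $\psi_{n+1}(x)\in(0,1)$ for infinitely many $n$ at a dense set of $x$, so the obstruction above recurs infinitely often and condition~$(3)$ cannot hold. The tension you diagnose between ``controlled convergence'' and ``dense activation'' is real, and with isolated lifted points it is not resolvable.

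The paper sidesteps this by using \emph{intervals} instead of points. One takes $g(x)=\bigcup_{n}\varphi_n([0,x])$, lets $Z_1$ consist of the sets $\varphi_1([0,a_1])\cup\cdots\cup\varphi_n([0,a_n])\cup\bigcup_{k>n}\{\varphi_k(0)\}$, and defines $\lambda$ by linear interpolation of the right endpoints $a_k$. The approximants are $g_n(x)=\bigcup_{k\le n}\varphi_k\bigl([0,x-\tfrac{1}{n+1}]\bigr)\cup\bigcup_{k>n}\{\varphi_k(0)\}$ (on $X=(0,1)$), and one sets $F_{n-1}=\{(x,y):|x-y|\le\tfrac{1}{n+2}\}$. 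The single observation that replaces all your combinatorics is the containment
\[
\lambda\bigl(g_n(x_n),g_{n+1}(x_n),t_n\bigr)\ \subseteq\ g_n(x_n)\cup g_{n+1}(x_n)\ \subseteq\ g(x)
\]
for every $(x_n,x)\in F_{n-1}$ and every $t_n\in[0,1]$: the first inclusion because $[0,(1-t)a+tb]\subseteq[0,a]\cup[0,b]$, the second because $x_n-\tfrac{1}{n+1}<x$. Upper Vietoris convergence in $(3)$ is then automatic, and lower Vietoris convergence follows since the right endpoints in each fixed coordinate tend to $x$. Everywhere discontinuity of $g$ is equally direct: $V=\bigcup_n\varphi_n\bigl([0,x_0+\tfrac1n)\cap[0,1]\bigr)$ is open with $g(x_0)\subseteq V$, yet $g(x)\not\subseteq V$ for any $x>x_0$. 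No enumeration of $\mathbb Q$, no bump functions; the monotone interval structure does all the work that your proposal postpones.
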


\begin{proof}
For every $n\in\mathbb N$ we denote by $\varphi_n$ some homeomorphism $\varphi_n:[0,1]\to Y_n$. We denote by $Z$ the space ${\mathcal F}(Y)$ and denote by $Z_1$ the subspace of $Z$ which consists of all sets $$\varphi_1([0,a_1])\cup\dots \cup \varphi_n([0,a_n])\cup\bigcup\limits _{k>n}\{\varphi_k(0)\},$$ where $n\in\mathbb N$ and $a_1,\dots , a_n\in [0,1]$. We consider the function $\lambda:Z_1^2\times [0,1]\to Z_1$ which defined by $$\lambda(u,v,t)=\varphi_1([0,(1-t)a_1+tb_1])\cup\dots \cup \varphi_n([0,(1-t)a_n+tb_n])\cup\bigcup\limits _{k>n}\{\varphi_k(0)\},$$ where $u=\varphi_1([0,a_1])\cup\dots \cup \varphi_n([0,a_n])\cup\bigcup\limits _{k>n}\{\varphi_k(0)\}, v=\varphi_1([0,b_1])\cup\dots \cup \varphi_n([0,b_n])\cup\bigcup\limits _{k>n}\{\varphi_k(0)\}\in Z_1$ and $t\in [0,1]$. It easy to se that the mapping $\lambda$ is continuous, i.e. the space $(Z_1,\lambda)$ is equiconnected.

For a convenience of notation we consider the interval $X=(0,1)$ instead the real line $\mathbb R$. Consider closed-valued mapping $g: (0,1)\to Y$ which defined by  $$g(x)=\bigcup\limits_{n=1}^{\infty}\varphi_n([0,x]).$$ Show that the mapping $g$ is everywhere discontinuous, namely, $g$ is not upper continuous at every point $x\in (0,1)$. Fix a point $x_0\in (0,1)$. The set $U=\bigcup\limits_{n=1}^{\infty}\varphi_n([0,x_0+\frac{1}{n})\cap[0,1])$ is open in the space $g((0,1))$ and $g(x_0)\subseteq U$. But the set $\{x\in (0,1):g(x)\subseteq U\}=(0,x_0]$ is not a neighborhood of $x_0$, i.e. $g$ is not upper continuous at $x_0$.

Put $G_0=F_0=X^2$, $G_n=\{(x,y)\in X^2: |x-y|<\frac{1}{n+2}\}$ and $F_n=\{(x,y)\in X^2: |x-y|\leq\frac{1}{n+3}\}$ for every $n\in\mathbb N$. Moreover, for every $n\in\mathbb N$ we consider the continuous closed-valued mapping $g_n:X\to Y$,
$$g_n(x)=\left\{\begin{array}{ll}
                         \bigcup\limits_{n=1}^{\infty}\{\varphi_n(0)\}, & x\in (0,\frac{1}{n+1}],\\
                         \bigcup\limits_{k=1}^{n}\varphi_k([0,x-\frac{1}{n+1}])\cup\bigcup\limits _{k>n}\{\varphi_k(0)\}, & x\in (\frac{1}{n+1},1).
                       \end{array}
 \right.$$ Note that $g_n(X)\subseteq Z_1$ for every $n\in\mathbb N$. Since  $g_n(x)\subseteq g(y)$ for every  $n\in\mathbb N$ and $x,y\in X$ ç $(x,y)\in F_{n-1}$, the sequence $(g_n)_{n=1}^{\infty}$ satisfies the condition $(3)$ of Theorem \ref{th:3.1}. It remains to use Theorem \ref{th:3.1}.
\end{proof}

\begin{remark}\label{r:4.1} For the case $X=[0,1]$ similar theorem can be proved analogously. Herewith it is sufficient to consider the mapping $g: X\to Y$,
$$g(x)=\left\{\begin{array}{ll}
                         \bigcup\limits_{n=1}^{\infty}\varphi_n([0,x]), & x\in [0,1),\\
                         \bigcup\limits_{n=1}^{\infty}\{\varphi_n(0)\}, & x=1,
                       \end{array}
 \right.$$ and corresponding sequence of continuous mappings $g_n:X\to Y$,
$$g_n(x)=\left\{\begin{array}{lll}
                         \bigcup\limits_{n=1}^{\infty}\{\varphi_n(0)\}, & x\in (0,\frac{1}{n+1}]\cup [\frac{n}{n+1},1],\\
                         \bigcup\limits_{k=1}^{n}\varphi_k([0,x-\frac{1}{n+1}])\cup\bigcup\limits _{k>n}\{\varphi_k(0)\}, & x\in (\frac{1}{n+1},\frac{n^2+1}{(n+1)^2}),\\
                         \bigcup\limits_{k=1}^{n}\varphi_k([0,-nx+\frac{n^2}{n+1}])\cup\bigcup\limits _{k>n}\{\varphi_k(0)\}, & x\in [\frac{n^2+1}{(n+1)^2},\frac{n}{n+1}).
                       \end{array}
 \right.$$
\end{remark}

The following corollary gives the negative answer to Question \ref{q:1}.

\begin{corollary}\label{cor:4.3}
There exists a closed-valued separately continuous mapping $f:[0,1]^2\to \mathbb R$ with the everywhere discontinuous diagonal.
\end{corollary}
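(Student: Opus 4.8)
The plan is to deduce this statement at once from Theorem~\ref{th:4.1}, in the variant for the domain $[0,1]^2$ indicated in Remark~\ref{r:4.1}, applied with $Y=\mathbb R$. To invoke that theorem I only need to exhibit inside $\mathbb R$ a discrete sequence $(Y_n)_{n=1}^\infty$ of subspaces each homeomorphic to $[0,1]$; everything else is already contained in the construction given there.

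First I would choose pairwise disjoint nondegenerate closed intervals that march off to infinity with gaps bounded below, for instance $Y_n=[3n,3n+1]$ for $n\in\mathbb N$. Each $Y_n$, being a nondegenerate closed interval of $\mathbb R$, is homeomorphic to $[0,1]$. Then I would verify that the family $(Y_n)_{n=1}^\infty$ is discrete in $\mathbb R$: consecutive intervals are separated by gaps of length $2$, so for any point $x\in\mathbb R$ the open interval of radius less than $1$ centred at $x$ meets at most one of the sets $Y_n$. Thus every point of $\mathbb R$ has a neighborhood intersecting at most one member of the family, which is precisely the definition of discreteness.

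With this discrete sequence of arcs in hand, Theorem~\ref{th:4.1}, invoked through Remark~\ref{r:4.1} for the square $[0,1]^2$, produces a separately continuous closed-valued mapping $f:[0,1]^2\to\mathbb R$ whose diagonal is everywhere discontinuous, which is exactly the assertion of the corollary. The only point requiring any argument is the existence of the discrete family of arcs inside $\mathbb R$, and this is entirely elementary; the substantive content lies in Theorem~\ref{th:4.1}, so I expect no genuine obstacle here.
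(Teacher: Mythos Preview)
Your proposal is correct and is exactly the argument the paper intends: the corollary is stated without proof because it follows immediately from Theorem~\ref{th:4.1} (in the $[0,1]$ version of Remark~\ref{r:4.1}) once one observes that $\mathbb R$ contains a discrete sequence of arcs, and your choice $Y_n=[3n,3n+1]$ does the job.
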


\section{Examples}

The following example indicates that in Corollary \ref{cor:3.2} the condition of the equiconnectivity of $(Z_1,\lambda)$ in $Z$ can not be weakened to the condition of the equiconnectivity of $(Z_1,\lambda)$.

\begin{proposition}\label{pr:5.1}
 There exist a metrizable space $Z$, an equiconnected subspace $Z_1$ of space $Z$  and a function $g:[0,1]\to Z$ of the first Baire class such that the following condions hold:

 $(1)$\,\,there exists a sequence of continuous function $g_n:[0,1]\to Z_1$ which converges to $g$ pointwise on $[0,1]$;

 $(2)$\,\,the function $g$ is not the diagonal of any separately continuous function $f:[0,1]^2\to Z$.
\end{proposition}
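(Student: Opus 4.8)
The plan is to realize $Z$ as the closed topologist's sine curve and $Z_1$ as its oscillating arc: an equiconnected homeomorphic copy of a half-open interval whose closure in $Z$ adds a whole segment lying in a \emph{different} path component. That gap between path components is exactly what I will use to obstruct $g$ from being a diagonal. Concretely I would set
$$S=\{(x,\sin\tfrac1x):0<x\le 1\},\qquad L=\{0\}\times[-1,1],\qquad Z=\overline S=S\cup L,$$
regarded as a subspace of $\mathbb R^2$, so that $Z$ is metrizable, and take $Z_1=S$. The projection $\pi_1(x,y)=x$ restricts to a homeomorphism of $S$ onto $(0,1]$ with inverse $\psi(x)=(x,\sin\frac1x)$, so the convex-combination homotopy on $(0,1]$ transports to a continuous $\lambda(\psi(s),\psi(s'),t)=\psi((1-t)s+ts')$ with values in $Z_1$, witnessing that $(Z_1,\lambda)$ is equiconnected.

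For the function I would put $g(0)=(0,0)\in L$ and $g(x)=\psi(x)\in S$ for $x\in(0,1]$. Condition $(1)$ is then routine: I would exhibit the explicit approximants $g_n\colon[0,1]\to Z_1$, $g_n(x)=\psi(\max(x,\frac1{n\pi}))$, each continuous and $Z_1$-valued. For fixed $x>0$ one has $g_n(x)=\psi(x)=g(x)$ as soon as $\frac1{n\pi}<x$, while $g_n(0)=\psi(\frac1{n\pi})=(\frac1{n\pi},0)\to(0,0)=g(0)$; hence $g_n\to g$ pointwise and $g$ is of the first Baire class.

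The heart of the matter, and the step I expect to be the main obstacle, is condition $(2)$. I would first record the classical fact that $Z$ is connected but not path connected, its two path components being exactly $S$ and $L$: the oscillation of $\sin\frac1x$ near $0$ forbids any path from leaving $L$ into $S$, and a careful proof of this (applying the intermediate value theorem to the second coordinate of a hypothetical crossing path, whose first coordinate must tend to $0$) is the part that genuinely needs attention. Granting it, suppose a separately continuous $f\colon[0,1]^2\to Z$ had diagonal $g$, and fix any $x_0\in(0,1]$. The horizontal section $x\mapsto f(x,0)$ is a path with $f(0,0)=g(0)\in L$, so it stays inside the path component $L$; in particular $f(x_0,0)\in L$. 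The vertical section $y\mapsto f(x_0,y)$ is a path with $f(x_0,x_0)=g(x_0)\in S$, so it stays inside $S$; in particular $f(x_0,0)\in S$. Since $S\cap L=\emptyset$, this is a contradiction, which establishes $(2)$.

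Finally I would note the consistency of the whole picture: $Z_1=S$ cannot be equiconnected \emph{in} $Z$, since otherwise Corollary~\ref{cor:3.2} would produce exactly the separately continuous $f$ just excluded. This is precisely the phenomenon the proposition is meant to display — the equiconnecting homotopy on $Z_1$ does not extend continuously to the diagonal of $Z$, and the obstruction is detected by the two distinct path components meeting at the value $g(0)$.
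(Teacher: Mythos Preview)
Your argument is correct, and it reaches the conclusion by a genuinely different construction from the paper's. The paper takes
\[
Z=\{(0,1),(0,-1)\}\cup\bigcup_{n\ge 1}\{(x,nx):x\in\mathbb R\},\qquad Z_1=\bigcup_{n\ge 1}\{(x,nx):x\in\mathbb R\},
\]
with $g$ the two-valued step function $g\equiv(0,1)$ on $[0,\tfrac12)$ and $g\equiv(0,-1)$ on $[\tfrac12,1]$; the obstruction is that $\{(0,1)\}$ and $\{(0,-1)\}$ are singleton connectivity components of $Z\setminus\{(0,0)\}$, so every section of a hypothetical $f$ through one of these values is forced to be constant, making $f$ constant. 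Your closed topologist's sine curve plays the same structural role---two path components, with $g$ straddling them---but packages it more classically: the path-component dichotomy $S$ versus $L$ lets you trap $f(x_0,0)$ in both $S$ and $L$ by looking at one horizontal and one vertical section, a very clean two-line contradiction. The paper's choice buys a $g$ with finite range (so the failure is visibly about connectivity, not the complexity of $g$) and a starlike $Z_1$ whose equiconnectedness is immediate from linear interpolation in $\mathbb R^2$; your choice buys a familiar textbook space and a shorter obstruction argument, at the cost of importing the standard (but nontrivial) fact that no path in $\overline S$ joins $L$ to $S$. Both examples illustrate the same point you close with: the equiconnecting homotopy on $Z_1$ cannot be extended continuously across the diagonal of $Z$, which is exactly the extra hypothesis Corollary~\ref{cor:3.2} needs.
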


\begin{proof}
We show that the spaces $$Z=\{(0,1), (0,-1)\}\cup\bigcup\limits_{n=1}^{\infty}\{(x,nx):x\in\mathbb R\},$$ $$Z_1=\bigcup\limits_{n=1}^{\infty}\{(x,nx):x\in\mathbb R\}$$ with the topologies induced by $\mathbb R^2$ and the function
$$g(x)=\left\{\begin{array}{ll}
                         (0,1), & x\in [0,\frac{1}{2}),\\
                         (0,-1), & x\in[\frac{1}{2},1],
                       \end{array}
 \right.$$ are desired. It easy to see that $Z_1$ is equiconnected. For the proof of $(1)$ it is sufficient to consider a sequence of the functions

$$g_n(x)=\left\{\begin{array}{lll}
                         (\frac{1}{n},1), & x\in [0,\frac{n-1}{2n}],\\
                         (-4x+\frac{2n-1}{n},-4nx+2n-1), & x\in (\frac{n-1}{2n},\frac{1}{2}],\\
                         (-\frac{1}{n},-1), & x\in (\frac{1}{2},1].
                       \end{array}
 \right.$$ It remains to verify $(2)$.

Suppose that there exists a separately continuous function $f:[0,1]^2\to Z$ with the diagonal $g$. Since for every $n\in\mathbb N$ the set $\{(x,nx):x\in\mathbb R\}\setminus \{(0,0)\}$ is an open-closed set in $Z\setminus\{(0,0)\}$, the sets $\{(0,1)\}$ and $\{(0,-1)\}$ are components of the connectivity in the space $Z\setminus\{(0,0)\}$. This implies that every connected set which contains at least one of points $(0,1)$ and $(0,-1)$ either is one-pointed or contains the point $(0,0)$. Therefore the function $f$ is vertically and horizontally constant. Thus, $f$ is a constant which gives a contradiction.
\end{proof}

In the connection with Theorem \ref{th:4.1} and Corollary \ref{cor:4.3} the following question naturally arises: exist there for every mapping $g:[0,1]\to {\mathcal F}(\mathbb R)$ of the first Baire class a separately continuous mapping $f:[0,1]^2\to \mathbb R$ with the diagonal $g$?
The following example gives a negative answer to this question.

\begin{proposition}\label{pr:5.2}
There exists a closed-valued mapping $g:[0,1]\to \mathbb R$ of the first Baire class such that $g$ is not the diagonal for any separately continuous closed-valued mapping $f:[0,1]^2\to \mathbb R$.
\end{proposition}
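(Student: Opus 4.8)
The plan is to reduce Proposition \ref{pr:5.2} to Proposition \ref{pr:5.1} by composing a hypothetical separately continuous closed-valued $f$ with a single fixed \emph{continuous} map from $\mathcal F(\mathbb R)$ onto a space of the type produced in Proposition \ref{pr:5.1}. Accordingly I would take $g$ to have exactly the same shape as the scalar function there, namely two values with one jump; the multi-valuedness and the non-metrizability of $\mathcal F(\mathbb R)$ are what obstruct realizability.

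Concretely, fix two closed sets $A,B\subseteq\mathbb R$ (for definiteness $A=\{0,1,2,\dots\}$ and $B=\{0,-1,-2,\dots\}$) and put $g(x)=A$ for $x\in[0,\frac12)$ and $g(x)=B$ for $x\in[\frac12,1]$. That $g$ is of the first Baire class is checked exactly as in Proposition \ref{pr:5.1}. First fix a Vietoris-continuous path $\gamma:[0,1]\to\mathcal F(\mathbb R)$ from $A$ to $B$; such a path exists, for instance by deforming $A$ to the ray $[0,\infty)$ (filling the gaps), contracting $[0,\infty)$ to $\{0\}$ through the intervals $[0,s]$, and then running the mirror-image deformation out to $B$. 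Now set $g_n$ equal to $A$ on $[0,\frac12-\frac1n]$, equal to a reparametrisation of $\gamma$ on $[\frac12-\frac1n,\frac12]$, and equal to $B$ on $[\frac12,1]$. Each $g_n$ is continuous, and for every fixed $x\neq\frac12$ one has $g_n(x)=g(x)$ for all large $n$, while $g_n(\tfrac12)=B=g(\tfrac12)$; hence $g_n\to g$ pointwise, so $g$ is of the first Baire class.

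The heart of the matter is the reduction. Let $Z$, $Z_1$ and $\tilde g$ denote the space and the function constructed in Proposition \ref{pr:5.1} (so $\tilde g$ takes the two values $(0,1),(0,-1)$ and is not the diagonal of any separately continuous $h:[0,1]^2\to Z$). Suppose for a contradiction that some separately continuous closed-valued $f:[0,1]^2\to\mathbb R$ has diagonal $g$. If I can produce a continuous map $r:\mathcal F(\mathbb R)\to Z$ with $r(A)=(0,1)$ and $r(B)=(0,-1)$, then $h=r\circ f$ is again separately continuous, since each of its partial maps is the composition of the corresponding continuous partial map of $f$ with the continuous $r$; moreover its diagonal is $h(x,x)=r(g(x))=\tilde g(x)$. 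As $h$ maps $[0,1]^2$ into $Z$, this contradicts Proposition \ref{pr:5.1}, and the proof is finished.

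The main obstacle is the construction of the reduction map $r$, and here the situation differs sharply from Proposition \ref{pr:5.1}. The space $\mathcal F(\mathbb R)$ is (path-)connected, so there is no clopen set separating $A$ from $B$ and no invariant is locally constant on a large part of $\mathcal F(\mathbb R)$; consequently the slice-connectivity argument of Proposition \ref{pr:5.1} cannot be run directly inside $\mathcal F(\mathbb R)$, and the whole weight of the proof is shifted onto $r$, which must manufacture the ``hedgehog tip'' behaviour of $Z$ out of the non-metrizable Vietoris topology. The guiding idea is to let $r$ read off from $F\in\mathcal F(\mathbb R)$ how far and in which direction $F$ escapes to infinity, converting ``reaching far out in the positive direction'' into ``lying on a line $L_n$ of large slope near its upper tip'', so that upper-Vietoris proximity to $A$ is forced into arbitrarily small neighbourhoods of $(0,1)$, symmetrically for $B$ and $(0,-1)$, while bounded or balanced sets are pushed towards the centre $(0,0)$. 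Verifying that such an $r$ is genuinely continuous at \emph{every} $F$, in particular at unbounded sets and at the two tips where the escape-to-infinity of the Vietoris topology is essential, is the delicate step. Should a continuous $r$ with the two prescribed tip-values prove impossible to arrange, the fallback is to reprove the connectivity obstruction of Proposition \ref{pr:5.1} directly for the horizontal and vertical slices of $f$, exploiting that ``$t\in F$'' is a closed (upper semicontinuous) condition on $\mathcal F(\mathbb R)$.
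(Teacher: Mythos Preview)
Your approach has a fatal gap, and in fact your candidate $g$ does not work at all.

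You yourself observe that $\mathcal F(\mathbb R)$ is path-connected; let $\gamma:[0,1]\to\mathcal F(\mathbb R)$ be a continuous path with $\gamma(0)=B$ and $\gamma(1)=A$ (your own construction, or simply grow an interval out to $[0,\infty)$, shrink it to $\{0\}$, and run the mirror image). Now take any separately continuous $\phi:[0,1]^2\to[0,1]$ whose diagonal is the first Baire class function $1_{[0,1/2)}$; such $\phi$ exists by Baire's classical theorem for real-valued functions. Then $f=\gamma\circ\phi:[0,1]^2\to\mathcal F(\mathbb R)$ is separately continuous (a continuous map composed with a separately continuous one) and $f(x,x)=\gamma(1)=A$ for $x<1/2$, $f(x,x)=\gamma(0)=B$ for $x\ge 1/2$. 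Hence your $g$ \emph{is} the diagonal of a separately continuous closed-valued mapping, so it cannot witness the proposition.

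The same path-connectedness kills the reduction map $r$ you hoped for: if $r:\mathcal F(\mathbb R)\to Z$ were continuous with $r(A)=(0,1)$, then $r\circ\gamma$ would be a path in $Z$ starting at $(0,1)$. But a short argument (each punctured line $L_k\setminus\{(0,0)\}$ is clopen in $Z\setminus\{(0,\pm1)\}$, so any path leaving $(0,1)$ is forced through $(0,0)$ arbitrarily close to its starting time) shows that the path component of $(0,1)$ in $Z$ is the singleton $\{(0,1)\}$. Thus $r$ would have to be constant, and $r(B)=(0,-1)$ is impossible. So the ``main obstacle'' you flag is not merely delicate---it is insurmountable, and your fallback sketch gives no replacement mechanism.

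The paper's proof proceeds entirely differently: it takes $g(x)=\{x+n:n\ge 1\}$, whose values are pairwise disjoint, and invokes a structural result from \cite{MMF} saying that near the diagonal the unbounded part of $f(x,y)$ is trapped inside $g(x)$. A Baire category argument then produces an open set on which $f$ would have to be bounded, contradicting unboundedness of $g$ on the diagonal. The essential ingredient is the \emph{behaviour at infinity} of the values, not a two-valued jump; any finite-valued $g$ of your type will be realizable for the reason above.
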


\begin{proof}
We consider the closed-valued mapping $g:[0,1]\to \mathbb R$, $g(x)=\bigcup\limits_{n=1}^{\infty}\{x+n\}$. Note that for every $n\in\mathbb N$ the closed-valued mapping $g_n:[0,1]\to \mathbb R$, $g_n(x)=\bigcup\limits_{k=1}^{n}\{x+n\}$, is continuous. Moreover, $g_n\to g$ pointwise on $X$. Therefore the mapping $g$ is of the first Baire class.

Suppose that there exists a separately continuous closed-valued mapping $f:[0,1]^2\to \mathbb R$ such that $f(x,x)=g(x)$ for every $x\in X$. According to Proposition 3.3 from \cite{MMF} for every $x\in[0,1]$ there exists an integer $n_x\in\mathbb N$ such that $$\left(f(x,y)\cup f(y,x)\right)\cap\left((-\infty,-n_x)\cup (n_x,+\infty)\right)\subseteq g(x)$$ for every $y\in [0,1]$ with $|x-y|<\frac{1}{n_x}$. We choose $n\in\mathbb N$, an open nonempty set $U\subseteq [0,1]$ and dense in $U$ set $A$ such that $n_x\leq n$  for every $x\in A$. Without loss of the generality we can propose that ${\rm diam}(U)<\frac{1}{n}$. Then $$f(x,y)\cap\left((-\infty,-n_x)\cup (n_x,+\infty)\right)\subseteq g(x)\cap g(y)$$ for every $x,y\in A$. Since $g(x)\cap g(y)=\emptyset$ for every distinct $x,y\in[0,1]$, $f(x,y)\subseteq [-n,n]$ for every $x,y\in A$. Now the separate continuity of $f$ and density of $A$ in $U$ imply that $f(x,y)\subseteq[-n,n]$ for every $x,y\in U$. But this contradicts to the fact that $g$ is the diagonal of $f$.
\end{proof}

\bibliographystyle{amsplain}

\end{document}